\documentclass{article}
\usepackage{amssymb}
\usepackage{amsmath}
\usepackage{amsthm}
\usepackage[english]{babel}
\usepackage[letterpaper,top=2cm,bottom=2cm,left=3cm,right=3cm,marginparwidth=1.75cm]{geometry}
\usepackage{amsmath}
\usepackage[colorlinks=true, allcolors=blue]{hyperref}
\usepackage{todonotes}

\usepackage{thmtools}
\usepackage{cleveref}
\usepackage{diagbox}
\usepackage{xspace}

\usepackage{caption}
\usepackage{subcaption}
\usepackage{float}
\usepackage{mleftright}
\usepackage{multicol}
\usepackage{multirow}
\usepackage{enumitem}
\usepackage{comment}
\usepackage{mathtools}

\setlength {\marginparwidth }{2cm}

\newtheorem{theorem}{Theorem}[section]
\newtheorem{proposition}[theorem]{Proposition}
\newtheorem{corollary}[theorem]{Corollary}
\newtheorem{lemma}[theorem]{Lemma}

\newtheorem{definition}[theorem]{Definition}

\newcommand{\Exp}[1]{\mathbb{E}\mleft[#1\mright]}

\newcommand{\pr}[1]{\mathrm{Pr}\mleft(#1\mright)}

\newcommand{\cl}[1]{\mathrm{cl}\mleft(#1\mright)}

\newcommand{\stirling}[2]{S\mleft(#1,#2\mright)}

\newcommand{\mobius}{M{\"o}bius\xspace}

\newcommand{\eqdef}{\coloneqq}

\newlist{propenum}{enumerate}{1}
\setlist[propenum]{label=\arabic*., ref=\theproposition~(\arabic*)}
\crefalias{propenumi}{proposition}

\title{Time to Cycle}
\author{Nir Lavee\thanks{School of Computer Science and Engineering, Hebrew University, Jerusalem 91904, Israel. e-mail: nir.lavee@mail.huji.ac.il.} \and  {Nati Linial\thanks{School of Computer Science and Engineering, Hebrew University, Jerusalem 91904, Israel. e-mail: nati@cs.huji.ac.il.{~Supported in part by an ERC Grant 101141253, "Packing in Discrete Domains - Geometry and Analysis".}}}}
\date{}
\begin{document}
\maketitle

\begin{abstract}
Consider the random process that starts with $n$ vertices and no edges,
where the edges of $K_n$ are
added one at a time in a uniformly chosen random order 
$e_1, e_2,\ldots, e_{\binom{n}{2}}$. Let
$T$ be the earliest time at which $e_1$ belongs to a cycle in this evolving random graph.
By solving the appropriate graph enumeration problem we show that $\mathbb{E}[T]=n$.
This fact turns out to be an instance
of a much more general phenomenon and we are able to
extend this theorem to all graphs and even to every matroid.
\end{abstract}

\section{Introduction}
The evolution of random graphs has been extensively studied since its introduction 
by Erdős and Rényi \cite{erdHos1960evolution}. We know when the giant component emerges,
when the graph becomes connected, when particular subgraphs appear, and much more. 
Cycles and bridges in the emerging graph have also received much attention, as 
reviewed e.g., in 
\cite{Boll2001,janson2011random,frieze2015introduction}.

We have stumbled numerically upon the following intriguing phenomenon.
Start with an edgeless graph on $n$ vertices and add edges one by one, chosen
uniformly at random,
$e_1, e_2,\ldots, e_{\binom{n}{2}}$. Stop at the earliest
step at which $e_1$ ceases being a bridge and becomes part of a cycle. The expectation
of this stopping time appeared to be exactly $n$, as we verified numerically for 
all $n\le 20$. 
Standard facts from random graph theory easily yield that this stopping time is $\Theta(n)$,
but we found it intriguing that the expected stopping time turned out to be exactly $n$. 
As shown here, a similar result applies, in fact, much more broadly and
holds in general for all matroids.

We present a proof for the case of graphs in \Cref{sec:graphs}. 
The general proof for matroids appears in \Cref{sec:matroids}.

\section{Preliminaries}
\label{sec:prelims}
Recall the Stirling numbers of the second kind:
$\stirling{n}{k}$ denotes the number of maps from $[n]$ onto $[k]$ \cite{stanley2011enumerative}.
Let us collect some basic properties of these numbers.
\begin{proposition}
We have:
\begin{propenum}
\item\label{prop:stirling_k_0}
For every $n\ge k\ge 1$, there holds
$$\stirling{n}{k}= \stirling{n-1}{k}\cdot k+\stirling{n-1}{k-1}.$$    
\item\label{prop:stirling_k_1}
For every $n\ge 2$,
$$
\sum_{k=1}^{n} (-1)^{k-1} \cdot (k-1)! \cdot \stirling{n}{k}=0.
$$
\item\label{prop:stirling_k_2} 
For every $n\ge 1$,
$$
\sum_{k= 2}^n (-1)^{k} \cdot (k-2)! \cdot \stirling{n}{k} = n-1.
$$
\end{propenum}
\end{proposition}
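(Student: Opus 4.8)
\emph{Plan of proof.} Part~(1) is the classical recurrence for the Stirling numbers of the second kind. One way to see it is to classify the partitions of $[n]$ into $k$ nonempty blocks according to the block $B$ that contains $n$. If $B=\{n\}$, deleting $B$ leaves a partition of $[n-1]$ into $k-1$ blocks, which accounts for the summand $\stirling{n-1}{k-1}$; if $|B|\ge 2$, then deleting $n$ leaves a partition of $[n-1]$ into $k$ blocks, and $n$ could have been placed in any one of those $k$ blocks, which accounts for $k\cdot\stirling{n-1}{k}$. Parts~(2) and~(3) will be deduced from this recurrence by substitution and re-indexing.

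For part~(2), substitute the recurrence into the left-hand side and split it into the contribution of the $k\cdot\stirling{n-1}{k}$ terms and the contribution of the $\stirling{n-1}{k-1}$ terms. The former carries the weight $(k-1)!\cdot k=k!$ and, after we discard the vanishing $k=n$ term (recall $\stirling{n-1}{n}=0$), equals $\sum_{k=1}^{n-1}(-1)^{k-1}k!\,\stirling{n-1}{k}$. The latter, after the shift $j=k-1$ and using $\stirling{n-1}{0}=0$ (valid since $n\ge 2$), equals $\sum_{j=1}^{n-1}(-1)^{j}j!\,\stirling{n-1}{j}$. These two sums are negatives of one another, so their sum vanishes.

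For part~(3), write $g(n)$ for the left-hand side, substitute the recurrence, and use the identity $k\cdot(k-2)!=(k-1)!+(k-2)!$ to split the $k\cdot\stirling{n-1}{k}$ contribution. This writes $g(n)$ as a sum of three pieces. The $(k-2)!$-weighted piece over $\stirling{n-1}{k}$ is, after dropping the vanishing $k=n$ term, exactly $g(n-1)$. The piece coming from $\stirling{n-1}{k-1}$, after the shift $j=k-1$, is precisely the alternating sum of part~(2) at $n-1$, and hence is $0$ when $n-1\ge 2$. The $(k-1)!$-weighted piece over $\stirling{n-1}{k}$ is the full sum of part~(2) at $n-1$ (which is $0$) with its $k=1$ term --- namely $(-1)^{1}\cdot 0!\cdot\stirling{n-1}{1}=-1$ --- removed, so it equals $1$. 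Altogether $g(n)=g(n-1)+1$ for $n\ge 3$; together with the direct computations $g(1)=0$ and $g(2)=1$, induction gives $g(n)=n-1$.

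The only delicate point is the bookkeeping in part~(3): tracking the index shift together with the boundary terms at both ends of the range, and making sure part~(2) is invoked only where its hypothesis $n-1\ge 2$ holds --- which is exactly why $n=1$ and $n=2$ must be checked separately as base cases. Everything else is routine manipulation. As an alternative to the entire argument, one can use the exponential generating function $\sum_{n\ge k}\stirling{n}{k}\,t^{n}/n!=(e^{t}-1)^{k}/k!$: summing $(-1)^{k-1}(k-1)!$ times this over $k\ge 1$ gives $\sum_{k\ge 1}(-1)^{k-1}(e^{t}-1)^{k}/k=\ln\bigl(1+(e^{t}-1)\bigr)=t$, and extracting the coefficient of $t^{n}$ proves part~(2); summing $(-1)^{k}(k-2)!$ times it over $k\ge 2$, after the partial-fraction split $\tfrac{1}{k(k-1)}=\tfrac{1}{k-1}-\tfrac{1}{k}$, gives $t e^{t}-e^{t}+1$, whose coefficient of $t^{n}$ times $n!$ equals $n-1$, which proves part~(3).
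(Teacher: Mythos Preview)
Your main argument is correct and follows essentially the same route as the paper: the recurrence in~(1), direct cancellation for~(2) after substituting the recurrence, and induction for~(3) using the split $k\cdot(k-2)!=(k-1)!+(k-2)!$ together with two appeals to~(2); you are simply more explicit than the paper about the index shifts and the need for $n-1\ge 2$ when invoking~(2). Your exponential-generating-function alternative is a genuinely different and self-contained route that the paper does not take, and it is a pleasant addition.
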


\begin{proof}
The first claim follows by counting separately those onto maps $f:[n]\to[k]$ for
which there exists some $x\neq 1$ in $[n]$ with $f(x)=f(1)$
resp.\ those for which no such $x$ exists.\\
To prove the second claim we apply the above identity
to this expression and cancel equal terms.\\
The third claim is easily verified for $n=1$ and $n=2$. 
Proceeding by induction and using Item \ref{prop:stirling_k_0}, the sum expands to
\[
\sum_{k=2}^n (-1)^k \cdot (k-2)! \cdot \stirling{n-1}{k}\cdot k+\sum_{k=2}^n (-1)^k \cdot (k-2)! \cdot \stirling{n-1}{k-1}.
\]
The latter term vanishes by Item \ref{prop:stirling_k_1}. We rewrite the first sum as
\[
\sum_{k=2}^n (-1)^k \cdot (k-2)! \cdot \stirling{n-1}{k}\cdot (k-1) + \sum_{k=2}^n (-1)^k \cdot (k-2)! \cdot \stirling{n-1}{k}. 
\]
The second sum is $n-2$ by induction, and by Item \ref{prop:stirling_k_1} 
the first term equals $1$.
\end{proof}

\begin{proposition}
\label{prop:binom_div}
    For every $n\ge m \ge 0$, we have
    \[
    \sum_{t=0}^m \frac{\binom{m}{t}}{\binom{n}{t}} = \frac{n+1}{n+1-m}.
    \]
\end{proposition}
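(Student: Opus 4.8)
The plan is to rewrite the summand as a ratio of binomial coefficients to which a telescoping (hockey–stick) identity applies. The starting point is the elementary identity
\[
\frac{\binom{m}{t}}{\binom{n}{t}} = \frac{\binom{n-t}{\,n-m\,}}{\binom{n}{m}}, \qquad n\ge m\ge t\ge 0,
\]
which one verifies by expanding both sides into factorials: each equals $\tfrac{m!\,(n-t)!}{(m-t)!\,n!}$. The crucial feature is that the denominator $\binom{n}{m}$ no longer depends on $t$, so it can be pulled outside the sum, leaving $\frac{1}{\binom{n}{m}}\sum_{t=0}^{m}\binom{n-t}{n-m}$.

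Next I would reindex by $j=n-t$, which turns the remaining sum into $\sum_{j=n-m}^{n}\binom{j}{n-m}$, a block of consecutive binomial coefficients sharing the lower index $n-m$. The hockey–stick identity $\sum_{j=r}^{N}\binom{j}{r}=\binom{N+1}{r+1}$ (a one-line induction on $N$ using Pascal's rule) collapses this to $\binom{n+1}{n-m+1}=\binom{n+1}{m}$. Combining the steps,
\[
\sum_{t=0}^{m}\frac{\binom{m}{t}}{\binom{n}{t}}
= \frac{1}{\binom{n}{m}}\sum_{j=n-m}^{n}\binom{j}{n-m}
= \frac{\binom{n+1}{m}}{\binom{n}{m}}
= \frac{n+1}{\,n+1-m\,},
\]
the last equality being the cancellation $\binom{n+1}{m}\big/\binom{n}{m}=\frac{(n+1)!\,(n-m)!}{(n+1-m)!\,n!}=\frac{n+1}{n+1-m}$. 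The degenerate cases $m=0$ and $m=n$ agree with this (for $m=n$ both sides equal $n+1$), and since $n\ge m$ the denominator $n+1-m$ is positive, so nothing divides by zero.

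There is no genuinely hard step: the only thing one must spot is the reformulation in the first display, after which everything is routine bookkeeping. As an alternative that bypasses even that observation, one can use the Beta-integral evaluation $1/\binom{n}{t}=(n+1)\int_0^1 x^{t}(1-x)^{n-t}\,dx$; then the sum equals $(n+1)\int_0^1 (1-x)^{n-m}\bigl(x+(1-x)\bigr)^{m}\,dx=(n+1)\int_0^1 (1-x)^{n-m}\,dx=\frac{n+1}{n+1-m}$, with the binomial theorem doing all the work inside the integral. I would present the combinatorial argument as the main proof and mention the analytic one in a remark.
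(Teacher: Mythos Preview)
Your main argument is correct and essentially identical to the paper's: both rewrite the summand as $\tfrac{m!\,(n-t)!}{(m-t)!\,n!}$ (the paper via bare factorials, you via the identity $\binom{m}{t}/\binom{n}{t}=\binom{n-t}{n-m}/\binom{n}{m}$), reindex, and apply the hockey-stick identity to collapse the sum to $\binom{n+1}{m}/\binom{n}{m}$. The Beta-integral alternative you sketch is a genuinely different and pleasant route that the paper does not take.
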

\begin{proof}
    By a well-known variant of the hockey-stick identity (see e.g., \cite{graham94} chapter 5.1), we have
    \[\sum_{s=0}^m \binom{n-m+s}{s}=\binom{n+1}{m}\]
    and therefore,
    \[
    \sum_{t=0}^m \frac{\binom{m}{t}}{\binom{n}{t}}=\frac{m!}{n!}\sum_{t=0}^m \frac{(n-t)!}{(m-t)!}=\frac{m!}{n!}\sum_{s=0}^m \frac{(n-m+s)!}{s!}=\frac{m!\cdot (n-m)!}{n!}\cdot \binom{n+1}{m}=\frac{n+1}{n+1-m}.
    \]
\end{proof}

\subsection{Matroids}
We use standard matroid terminology, e.g., as in \cite{oxley2006matroid, white1987combinatorial, zaslavsky1975facing}. 
In what follows, $M=(E,I)$ is a matroid with rank function $r$, and 
$\mathcal{F}_M=\mathcal{F}$ is the collection of $M$'s flats. The set $\mathcal{F}$
carries the structure of a geometric lattice and
its \mobius function is denoted by $\mu$. Recall
that if $M$ is loopless, then its {\em characteristic polynomial} is defined as
\begin{equation}\label{eq:mu}
p_M(\lambda)=\sum_{A\in\mathcal{F}} \mu(\varnothing,A) \cdot \lambda^{r(E)-r(A)}.
\end{equation}
Let us recall that for a flat $A\in \mathcal{F}$, the contracted matroid $M/A$ is loopless. Applying \Cref{eq:mu} to $M/A$, taking
the derivative and evaluating at $\lambda=1$, we have
\[
p'_{M/A}(1)=\sum_{B\in\mathcal{F}_{M/A}} \mu_{M/A}(\varnothing,B)\cdot (r_{M/A}(E_{M/A})-r_{M/A}(B)).
\]
But $p'_{M/A}(1)$ is also 
expressible in terms of the rank and $\mu$ functions of $M$. Every flat $B\in\mathcal{F}_{M/A}$ corresponds to a flat $B'=A\cup B\in\mathcal{F}$ such that $r(B')=r_{M/A}(B)+r(A)$ and $\mu_{M/A}(\varnothing,B)=\mu(A,B')$. Therefore
\[
p'_{M/A}(1)=\sum_{B\in\mathcal{F}} \mu(A,B)\cdot (r(E)-r(B)).
\]
We remark that $(-1)^{r(E)-1} \cdot p'_M(1)$ is known as the beta invariant 
of $M$ \cite{crapo1967higher}. We also recall \mobius inversion: For any two functions $f,g:\mathcal{F}\to\mathbb{R}$, we have
\[
f(H)=\sum_{\substack{A\in\mathcal{F}\\ \text{~s.t.~} H\subseteq A}} g(A)\quad\quad\text{if and only if}\quad\quad g(H)=\sum_{\substack{B\in\mathcal{F}
}
} \mu(H,B)\cdot f(B).
\]
In particular, for any $f:\mathcal{F}\to\mathbb{R}$ we can define $g$ by the latter expression, and obtain
\begin{equation}
    \label{eq:mu_identity}
    f(H)=\sum_{\substack{A\in\mathcal{F}\\ \text{~s.t.~} H\subseteq A}} g(A)=
    \sum_{\substack{A,B\in\mathcal{F}\\ \text{~s.t.~} H\subseteq A}} \mu(A,B) \cdot f(B).
\end{equation}
We require the following immediate consequences of \mobius inversion.
\begin{proposition}
\label{prop:mu_intermediate_sum}
    If $M$ is loopless,
    \[
    \sum_{A,B\in\mathcal{F}} \mu(A,B)\cdot (|E|-|A|) \cdot(r(E)-r(B))=|E|.
    \]
\end{proposition}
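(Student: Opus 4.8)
The plan is to deduce the identity from two instances of the \mobius-inversion formula \eqref{eq:mu_identity}, applied to $f(B)\eqdef r(E)-r(B)$. The extra ingredient is a combinatorial expansion of the weight $|E|-|A|$: since $M$ is loopless, every $e\in E$ has a rank-$1$ closure $\cl{e}$, and for a flat $A$ one has $e\in A$ if and only if $\cl{e}\subseteq A$; hence
\[
|E|-|A|=\sum_{e\in E}\bigl(1-[\,\cl{e}\subseteq A\,]\bigr).
\]
Substituting this into the left-hand side and interchanging the (finite) summations rewrites the quantity as $\sum_{e\in E}$ of a difference of two double \mobius sums: one with $A$ ranging over all of $\mathcal{F}$, and one with $A$ ranging over the flats containing $\cl{e}$.

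Each of these two double \mobius sums is a direct instance of \eqref{eq:mu_identity}. Taking $H=\varnothing$ — a flat precisely because $M$ is loopless — and $f(B)=r(E)-r(B)$, \eqref{eq:mu_identity} gives $\sum_{A,B\in\mathcal{F}}\mu(A,B)\,(r(E)-r(B))=f(\varnothing)=r(E)$. Taking instead $H=\cl{e}$ gives $\sum_{A,B\in\mathcal{F},\ \cl{e}\subseteq A}\mu(A,B)\,(r(E)-r(B))=f(\cl{e})=r(E)-1$, using again that $r(\cl{e})=1$ in a loopless matroid. Subtracting, the contribution of each $e$ is $r(E)-(r(E)-1)=1$, and summing over the $|E|$ elements of the ground set yields the claim.

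I do not expect a genuine obstacle here; the only points requiring care are the element-by-element expansion of $|E|-|A|$ through the equivalence $e\in A\iff\cl{e}\subseteq A$ for flats, and the two appeals to looplessness (to guarantee $\varnothing\in\mathcal{F}$ and $r(\cl{e})=1$). Alternatively one could note that the inner sum over $B$ is $p'_{M/A}(1)$ and rewrite the left-hand side as $\sum_{A\in\mathcal{F}}(|E|-|A|)\,p'_{M/A}(1)$, but the direct route through \eqref{eq:mu_identity} above is shorter and self-contained.
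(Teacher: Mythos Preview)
Your argument is correct. Both your proof and the paper's rest on the same two \mobius evaluations at $H=\varnothing$ and $H=\cl{e}$, together with the per-element expansion of the cardinality weight, so the approaches are essentially the same. The only organisational difference is that the paper first multiplies out $(|E|-|A|)(r(E)-r(B))$ into three terms and handles each separately (one via the row-sum property $\sum_B\mu(A,B)=0$ for $A\neq E$, two via \eqref{eq:mu_identity} with $f(B)=r(B)$), whereas you keep $f(B)=r(E)-r(B)$ intact and expand only $|E|-|A|=\sum_{e}(1-[\cl{e}\subseteq A])$; this lets you skip the row-sum step and finish with just two invocations of \eqref{eq:mu_identity}. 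Your route is marginally shorter, but the content is the same.
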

\begin{proof}
    Expand the left-hand-side to obtain three sums:
    \[
    \sum_{A,B\in\mathcal{F}} \mu(A,B)\cdot (|E|-|A|) \cdot r(E)-\sum_{A,B\in\mathcal{F}} \mu(A,B)\cdot |E| \cdot r(B)+\sum_{A,B\in\mathcal{F}} \mu(A,B)\cdot |A|\cdot r(B) 
    \]
    The first sum vanishes: Note that $A=E$ does not contribute to the sum, and for $A\subsetneq E$ the sum $\sum_{B\in\mathcal{F}}\mu(A,B)$ is $0$, because by definition,
    \[\mu(A,E)=-\sum_{\substack{B\in\mathcal{F}\\\text{~s.t.~} B\subsetneq E}} \mu(A,B).\]
    The second sum also vanishes, by \mobius inversion. Let $f(B)\eqdef r(B)$ and $H=\varnothing$ in \Cref{eq:mu_identity}:
    \[
    \sum_{A,B\in\mathcal{F}} \mu(A,B)\cdot r(B)=r(\varnothing)=0.
    \]
    It remains to show that the third sum indeed satisfies
    \[
    \sum_{A,B\in\mathcal{F}} \mu(A,B)\cdot |A|\cdot r(B)=|E|.
    \]
    Rearranging the left-hand-side,
    \[
    \sum_{A,B\in\mathcal{F}} \mu(A,B)\cdot |A|\cdot r(B)=\sum_{A,B\in\mathcal{F}} \mu(A,B)\cdot r(B) \cdot \sum_{e\in A}1=\sum_{e\in E}\sum_{\substack{A,B\in\mathcal{F}\\\text{~s.t.~} e\in A}} \mu(A,B)\cdot r(B).
    \]
    It is sufficient to show that the inner sum is $1$, so that the total is $|E|$. Again we appeal to \mobius inversion. Let $f(B)\eqdef r(B)$ and $H=\cl{\{e\}}$ in \Cref{eq:mu_identity}:
    \[
    \sum_{\substack{A,B\in\mathcal{F}\\\text{~s.t.~}e\in A}} \mu(A,B)\cdot r(B)=r(\cl{\{e\}})
    \]
    $M$ is loopless, so the rank of a single element is $1$.
\end{proof}
\begin{proposition}
\label{prop:mu_contr_sum}
    For every flat $H\in\mathcal{F}$,
    \[
     \sum_{\substack{A\in\mathcal{F}\\\text{~s.t.~} H\subseteq A}} (|E|-|A|) \cdot p'_{M/A}(1)=|E|-|H|.
    \]
\end{proposition}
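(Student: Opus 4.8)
The plan is to reduce the statement to \Cref{prop:mu_intermediate_sum} by passing to the contracted matroid $M/H$. First I would substitute the expression for $p'_{M/A}(1)$ derived in \Cref{sec:prelims}: for a flat $A\in\mathcal{F}$ we have $p'_{M/A}(1)=\sum_{B\in\mathcal{F}}\mu(A,B)\cdot(r(E)-r(B))$, where each summand vanishes unless $A\subseteq B$. Plugging this in, the left-hand side becomes
\[
\sum_{\substack{A\in\mathcal{F}\\ H\subseteq A}}(|E|-|A|)\sum_{B\in\mathcal{F}}\mu(A,B)\cdot(r(E)-r(B))
=\sum_{\substack{A,B\in\mathcal{F}\\ H\subseteq A}}\mu(A,B)\cdot(|E|-|A|)\cdot(r(E)-r(B)),
\]
and since $\mu(A,B)=0$ unless $A\subseteq B$, every nonzero term in fact satisfies $H\subseteq A\subseteq B$. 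Hence this equals $\sum_{A,B\in\mathcal{F}:\,H\subseteq A,\ H\subseteq B}\mu(A,B)\cdot(|E|-|A|)\cdot(r(E)-r(B))$.

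Next I would recognize this last sum as exactly the instance of \Cref{prop:mu_intermediate_sum} for the matroid $M/H$, which is legitimate because $M/H$ is loopless (being a contraction by a flat). The flats of $M/H$ are precisely the sets $A\setminus H$ for $A\in\mathcal{F}$ with $H\subseteq A$, and under this correspondence one has $|E_{M/H}|=|E|-|H|$, $|A\setminus H|=|A|-|H|$, $r_{M/H}(A\setminus H)=r(A)-r(H)$, and $\mu_{M/H}(A\setminus H,B\setminus H)=\mu(A,B)$. Therefore $|E_{M/H}|-|A\setminus H|=|E|-|A|$ and $r_{M/H}(E_{M/H})-r_{M/H}(B\setminus H)=r(E)-r(B)$, so \Cref{prop:mu_intermediate_sum} applied to $M/H$ reads
\[
\sum_{\substack{A,B\in\mathcal{F}\\ H\subseteq A,\ H\subseteq B}}\mu(A,B)\cdot(|E|-|A|)\cdot(r(E)-r(B))=|E_{M/H}|=|E|-|H|,
\]
which is precisely the desired identity.

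The only points requiring care are the bookkeeping in the flat correspondence for $M/H$ — in particular checking that cardinalities, ranks, and \mobius values transform in the stated way (the \mobius part being the isomorphism of the intervals $[A\setminus H,B\setminus H]$ in $\mathcal{F}_{M/H}$ and $[A,B]$ in $\mathcal{F}$) — and the observation that enlarging or shrinking the range of $B$ costs nothing since $\mu(A,B)$ already forces $A\subseteq B$. I do not anticipate a genuine obstacle: all the combinatorial substance is carried by \Cref{prop:mu_intermediate_sum}, and the argument here is essentially its relativization from $\varnothing$ to an arbitrary flat $H$.
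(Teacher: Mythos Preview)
Your proposal is correct and follows essentially the same approach as the paper: expand $p'_{M/A}(1)$ into the double sum over flats, then recognize the resulting expression as \Cref{prop:mu_intermediate_sum} applied to the loopless contraction $M/H$ via the standard correspondence between flats of $M/H$ and flats of $M$ containing $H$. Your write-up is in fact more explicit than the paper's about the bookkeeping (cardinalities, ranks, \mobius values under the lattice isomorphism), but the argument is the same.
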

\begin{proof}
    Expanding the characteristic polynomial, the left-hand-side is
    \[
     \sum_{\substack{A,B\in\mathcal{F}\\\text{~s.t.~} H\subseteq A}} \mu(A,B)\cdot (|E|-|A|)  \cdot (r(E)-r(B)).
    \]
    Consider the same sum expressed in terms of the contraction $M/H$:
    \[
     \sum_{A,B\in\mathcal{F}_{M/H}} \mu_{M/H}(A,B)\cdot (|E_{M/H}|-|A|)  \cdot (r_{M/H}(E_{M/H})-r_{M/H}(B)).
    \]
    Applying \Cref{prop:mu_intermediate_sum}, this indeed equals $|E_{M/H}|=|E|-|H|$.
\end{proof}
\begin{proposition}
[e.g., chapter 7 in \cite{white1987combinatorial}]
    \label{prop:mu_prank}
    We have
    \[
    \sum_{A\in\mathcal{F}} p'_{M/A}(1)=r(E).
    \]
\end{proposition}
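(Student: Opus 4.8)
The plan is to recognize the left-hand side as an instance of the \mobius inversion identity \Cref{eq:mu_identity} established in the preliminaries. Recall from the discussion following \Cref{eq:mu} that for every flat $A\in\mathcal{F}$,
\[
p'_{M/A}(1)=\sum_{B\in\mathcal{F}} \mu(A,B)\cdot (r(E)-r(B)),
\]
so $\sum_{A\in\mathcal{F}}p'_{M/A}(1)$ equals the double sum $\sum_{A,B\in\mathcal{F}}\mu(A,B)\cdot(r(E)-r(B))$, and the only task is to evaluate it.

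First I would set $f(B)\eqdef r(E)-r(B)$ and let $g$ be the function obtained from $f$ by \mobius inversion, i.e.\ $g(A)\eqdef\sum_{B\in\mathcal{F}}\mu(A,B)\cdot f(B)$. By the formula just recalled this is precisely $g(A)=p'_{M/A}(1)$. Applying \Cref{eq:mu_identity} with this $f$ and with $H=\varnothing$ (which is a flat, since $M$ is loopless, and satisfies $\varnothing\subseteq A$ for every flat $A$) gives
\[
f(\varnothing)=\sum_{A\in\mathcal{F}} g(A)=\sum_{A\in\mathcal{F}} p'_{M/A}(1),
\]
and since $r(\varnothing)=0$ the left-hand side is $r(E)$, which is the claimed identity.

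There is no real obstacle here; the content is just the choice of the test function $f$ and the base flat $H=\varnothing$. If one prefers to avoid quoting \Cref{eq:mu_identity}, the same computation can be carried out directly: write the double sum as $\sum_{B\in\mathcal{F}}(r(E)-r(B))\sum_{A\in\mathcal{F},\,A\subseteq B}\mu(A,B)$ and use the defining property of the \mobius function of a lattice, namely that $\sum_{A\subseteq B}\mu(A,B)$ equals $1$ when $B=\varnothing$ and $0$ otherwise; only the term $B=\varnothing$ survives, contributing $r(E)-r(\varnothing)=r(E)$.
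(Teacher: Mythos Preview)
Your proof is correct and essentially identical to the paper's: both expand $p'_{M/A}(1)$ via the \mobius formula and then apply \Cref{eq:mu_identity} with the test function $f(B)=r(E)-r(B)$. The only difference is cosmetic---the paper takes $H=\cl{\varnothing}$ rather than $H=\varnothing$, which sidesteps the need to assume $M$ is loopless.
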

\begin{proof}
Expand the characteristic polynomial:
\begin{align*}
\sum_{A,B\in\mathcal{F}}
\mu(A,B) \cdot  (r(E)-r(B))
\end{align*}
Now use \mobius inversion on $f(B)\eqdef r(E)-r(B)$. Substituting $H=\cl{\varnothing}$ in \Cref{eq:mu_identity}, we have
\[\sum_{A,B\in \mathcal{F}} \mu(A,B) \cdot (r(E)-r(B))=f(\cl{\varnothing})=r(E).
\]
\end{proof}

\section{Graphs}
\label{sec:graphs}
Let $G=(V,E)$ be an order-$n$ graph and let $e\in E$ be an edge.
We say that the edge $e$ is {\em split} in a subgraph $H$ of $G$
if its two vertices belong to different connected components of $H$.
Consider the random process that starts with $n$ vertices and no edges. The
edges in $E\setminus \{e\}$ are inserted one at a time in a uniformly
chosen random order. If $e$ is not a bridge in $G$, we denote by $T_e$ the earliest
step at which $e$ is not split. 
If $e$ is a bridge, we let $T_e\eqdef |E|$.
We can express $\Exp{T_e}$ as
\[\Exp{T_e}=\sum_{t=1}^{|E|}t\cdot \pr{T_e=t}=\sum_{t=0}^{|E|-1} \pr{T_e> t}.
\]
The event $T_e> t$ occurs when $e$ is split in
the subgraph formed by the first $t$ selected edges. Let 
\[d(e,t)\eqdef| \{t\text{-edge subgraphs of~}G\setminus e \text{~in which~}e\text{~is split}\}|\] 
Fix a subgraph $H$ where $e$ is split. Of the
$(|E|-1)!$ permutations of $E\setminus \{e\}$, 
there are $t!\cdot (|E|-1-t)!$ whose first $t$ edges coincide with $E(H)$. Therefore,
\begin{equation}
\label{eq:exp_Te}
\Exp{T_e}=\sum_{t=0}^{|E|-1} \frac{d(e,t)\cdot t!\cdot (|E|-1-t)!}{(|E|-1)!}=\sum_{t=0}^{|E|-1} \frac{d(e,t)}{\binom{|E|-1}{t}}.
\end{equation}

\begin{definition}
Let $G=(V,E)$ be a graph, and let  $k\ge 1$ be an integer. 
We denote by $\Pi_k(V)$ the collection of all
partitions of $V$ into $k$ non-empty subsets. Given a partition $A\in \Pi_k(V)$, let 
\[
E_A\eqdef\{\{x,y\}\in E : \text{$x$ and $y$ are in the same part in $A$}\}.
\]
Edges in $E_A$ resp.\ $E\setminus E_A$ are said to be {\em internal}
resp.\ {\em external} to $A$.
\end{definition}

\begin{lemma}
\label{lemma:graph_esum}
For every graph $G=(V,E)$, an edge $e\in E$ and $0\le t\le |E|-1$, there holds
\begin{equation}
\label{eq:_graph_desum}
\sum_{e\in E} d(e,t)=\sum_{k\ge 2} (-1)^{k} \cdot (k-2)!  \sum_{A\in \Pi_k(V)} \binom{|E_A|}{t}\cdot (|E|-|E_A|).
\end{equation}
\end{lemma}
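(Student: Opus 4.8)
The plan is to reinterpret both sides of the asserted identity as counting the same set of configurations. Summing the definition of $d(f,t)$ over $f\in E$, the left-hand side counts pairs $(f,S)$ with $f\in E$, $S\subseteq E$, $|S|=t$, and $f$ split in the subgraph $(V,S)$; here the requirement that $f$ be split already forces $f\notin S$, so nothing is lost by letting $S$ range over all $t$-subsets of $E$. For $S\subseteq E$ write $\pi(S)$ for the partition of $V$ into the connected components of $(V,S)$. Two elementary observations drive everything: for a partition $A\in\Pi_k(V)$ one has $S\subseteq E_A$ if and only if $A$ coarsens $\pi(S)$ (each block of $A$ is a union of blocks of $\pi(S)$); and $f$ is split in $(V,S)$ if and only if $f$ is external to $\pi(S)$.

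First I would massage the right-hand side. For each $A$ the factor $\binom{|E_A|}{t}\cdot(|E|-|E_A|)$ counts pairs $(S,f)$ with $S\subseteq E_A$, $|S|=t$, and $f$ external to $A$. Interchanging the order of summation, the right-hand side becomes $\sum_{S,f} w(S,f)$, where the outer sum runs over all $(S,f)$ with $S\subseteq E$, $|S|=t$, $f\in E$, and
\[
w(S,f)\;=\;\sum_{k\ge 2}(-1)^k(k-2)!\cdot\bigl|\{A\in\Pi_k(V):S\subseteq E_A,\ f\notin E_A\}\bigr|.
\]
Comparing with the count of the left-hand side, the lemma is equivalent to the claim that $w(S,f)=1$ if $f$ is split in $(V,S)$ and $w(S,f)=0$ otherwise.

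To prove this claim: if $f$ is not split in $(V,S)$ then $f\in E_{\pi(S)}$, hence $f\in E_A$ for every $A$ coarsening $\pi(S)$, so the index set in $w(S,f)$ is empty and $w(S,f)=0$. If $f$ is split, then by the first observation the partitions $A$ with $S\subseteq E_A$ are exactly the coarsenings of $\pi(S)$, and these are in block-number-preserving bijection with the partitions of the $m$-element set $B$ of blocks of $\pi(S)$, where $m=|\pi(S)|\ge 2$. Under this bijection, ``$f$ external to $A$'' becomes ``$A'$ separates the two distinct blocks $b_1,b_2\in B$ containing the endpoints of $f$''. Thus $w(S,f)=\sum_{A'\in\Pi_m(B)}(-1)^{|A'|}(|A'|-2)!\cdot[\,A' \text{ separates } b_1,b_2\,]$. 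Grouping by the number $j$ of blocks of $A'$ and using that the number of $j$-block partitions of an $m$-set separating two fixed elements is $\stirling{m}{j}-\stirling{m-1}{j}$ (merge the two elements to count those in which they lie together), we get
\[
w(S,f)\;=\;\sum_{j=2}^{m}(-1)^j(j-2)!\,\stirling{m}{j}\;-\;\sum_{j=2}^{m-1}(-1)^j(j-2)!\,\stirling{m-1}{j}\;=\;(m-1)-(m-2)\;=\;1,
\]
where the two evaluations invoke \Cref{prop:stirling_k_2} with $n=m$ and with $n=m-1$ (legitimate since $m\ge 2$). This establishes the claim and hence the lemma.

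The argument is essentially all bookkeeping, and the step I expect to require the most care is the passage between the two combinatorial models: pinning down precisely the equivalence ``$S\subseteq E_A$ iff $A$ coarsens $\pi(S)$'', correctly dispatching the degenerate cases ($f\in S$, and $m=2$), and setting up the bijection between coarsenings of $\pi(S)$ and partitions of its block set cleanly enough that \Cref{prop:stirling_k_2} applies verbatim to both $m$ and $m-1$. Everything else is the interchange of summation on the right-hand side together with the Stirling identities already established.
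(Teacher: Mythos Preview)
Your proof is correct and follows essentially the same approach as the paper: both interchange the order of summation to reduce the identity to showing that, for a fixed pair $(R,e)$, the signed weighted count of compatible partitions equals $1$ when $e$ is split in $(V,R)$ and $0$ otherwise. The only difference is in how that weight is evaluated: the paper enumerates compatible $k$-partitions by fixing the block containing one endpoint's component, obtaining $\sum_{m}\binom{j-2}{m-1}\stirling{j-m}{k-1}$ and then invoking \Cref{prop:stirling_k_1}, whereas you count partitions separating two fixed blocks as $\stirling{m}{j}-\stirling{m-1}{j}$ and apply \Cref{prop:stirling_k_2} twice---a slightly cleaner route to the same endpoint.
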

\begin{proof}
Consider the rightmost sum with some fixed $k\ge 2$.
It counts ways to choose a $k$-partition of $V$, 
say $A\in \Pi_k(V)$, a set $R\subseteq E_A$
of $A$-internal edges with $|R|=t$ and an $A$-external edge $e\notin E_A$. Given
a set $R$ of $t$ edges in $G$ and an edge $e\notin R$, we say that a
$k$-partition $A\in \Pi_k(V)$ is $(R,e)$-{\em compatible} if $R\subseteq E_A$
and $e\notin E_A$. In other words, given $R$ and $e$, 
this sum counts the $(R,e)$-compatible $k$-partitions 
$A\in \Pi_k(V)$. If $C_1,\ldots,C_j$ are the
connected components of the graph $(V,R)$, then necessarily
\begin{equation}\label{eq:refine}
\text{The partition~}\{C_1,\ldots,C_j\}\text{~of~} V 
\text{~is a refinement of the partition~} A.
\end{equation}
Also, since $e\notin E_A$, this edge must 
connect between two  distinct component of $(V,R)$, say $C_1$ and $C_2$. 
Consequently, $C_1$ and $C_2$ must be included in different parts of the partition $A$. 
To count $(R,e)$-compatible $k$-partitions $A$, 
we need to specify where the different $C_i$ go.
In view of (\ref{eq:refine}) and the previous comment we have to collect
$m$ of the $C_i$ including $C_1$ and excluding $C_2$, 
to be contained in some part of $A$,
and then partition the other $C_j$ into the remaining $k-1$ parts of $A$. This yields:
\[
\text{The number of~}(R,e)\text{-compatible~}k\text{-partitions~}A\text{~is~}
\sum_{m=1}^{j-1} \binom{j-2}{m-1} \cdot \stirling{j-m}{k-1}.
\]
Therefore, the contribution of $(R,e)$-compatible $k$-partitions to the right-hand-side of \Cref{eq:_graph_desum} is
\[
\sum_{k\ge 2} (-1)^k \cdot (k-2)!\sum_{m=1}^{j-1} \binom{j-2}{m-1} \cdot \stirling{j-m}{k-1},
\]
which we turn to evaluate. We rearrange this sum as:
\[
\sum_{m=1}^{j-1} \binom{j-2}{m-1}  \sum_{k\ge 2} (-1)^k \cdot (k-2)!\cdot \stirling{j-m}{k-1}.
\]
By \Cref{prop:stirling_k_1}, the inner sum vanishes if $j-m\ge 2$. For $m=j-1$, the 
inner sum is $1$, and also $\binom{j-2}{m-1}=1$. 
Therefore, every choice of
$R$ and $e$ contributes $1$ to the sum total. It follows that the
right-hand-side of \Cref{eq:_graph_desum} counts pairs $(R,e)$
where $R\subset E$ is a set of $t$ edges, and $e\in E$ 
connects two distinct components of $(V,R)$.
This equals the left-hand-side of \Cref{eq:_graph_desum}, i.e., $\sum_{e\in E} d(e,t)$.
    
\end{proof}

Let us denote by $\Pi'_k(V)\subseteq \Pi_k(V)$ the collection of those 
$k$-partitions of $V$ for which not all of $G$'s edges are internal to $A$.

\begin{theorem}
\label{thm:graph_det_sum}
If a graph $G=(V,E)$ has $n$ vertices and $c<n$ connected components, then
\[
\frac{1}{|E|}\sum_{e\in E} \sum_{t=0}^{|E|-1} \frac{d(e,t)}{\binom{|E|-1}{t}} = n-c.
\]
\end{theorem}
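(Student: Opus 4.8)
The plan is to combine the three tools assembled so far: the formula \eqref{eq:exp_Te} for $\Exp{T_e}$, the edge-sum identity of \Cref{lemma:graph_esum}, and the binomial-ratio identity of \Cref{prop:binom_div}. First I would note that the left-hand side is precisely the average $\frac1{|E|}\sum_{e\in E}\Exp{T_e}$, so by \eqref{eq:exp_Te} it suffices to evaluate $\sum_{e\in E}\sum_{t=0}^{|E|-1} d(e,t)/\binom{|E|-1}{t}$. Interchanging the order of summation and substituting \Cref{lemma:graph_esum} for $\sum_{e\in E}d(e,t)$, the target becomes
\[
\sum_{k\ge 2}(-1)^{k}(k-2)!\sum_{A\in \Pi_k(V)}(|E|-|E_A|)\sum_{t=0}^{|E|-1}\frac{\binom{|E_A|}{t}}{\binom{|E|-1}{t}}.
\]
The factor $|E|-|E_A|$ annihilates every partition with $E_A=E$, so only partitions $A\in\Pi'_k(V)$ contribute; for these $0\le|E_A|\le|E|-1$, which is exactly the regime in which \Cref{prop:binom_div} (taking $n\mapsto |E|-1$, $m\mapsto|E_A|$) applies and gives $\sum_{t=0}^{|E|-1}\binom{|E_A|}{t}/\binom{|E|-1}{t}=|E|/(|E|-|E_A|)$.

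Next I would carry out the resulting cancellation: the numerator $|E|-|E_A|$ cancels the denominator $|E|-|E_A|$, a factor $|E|$ pulls out, and we are left with
\[
\sum_{e\in E}\sum_{t=0}^{|E|-1}\frac{d(e,t)}{\binom{|E|-1}{t}}=|E|\cdot\sum_{k\ge 2}(-1)^{k}(k-2)!\,|\Pi'_k(V)|.
\]
Dividing by $|E|$, it remains to prove $\sum_{k\ge 2}(-1)^{k}(k-2)!\,|\Pi'_k(V)|=n-c$.

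For the last step I would count $|\Pi'_k(V)|$ directly. We have $|\Pi_k(V)|=\stirling{n}{k}$ by definition, and a partition $A$ satisfies $E_A=E$ exactly when every connected component of $G$ lies inside a single block of $A$, i.e. when $A$ is the partition of $V$ induced by a partition of the $c$ components of $G$ into $k$ nonempty blocks; the number of such partitions is $\stirling{c}{k}$. Hence $|\Pi'_k(V)|=\stirling{n}{k}-\stirling{c}{k}$, so the sum splits as $\sum_{k\ge 2}(-1)^{k}(k-2)!\stirling{n}{k}-\sum_{k\ge 2}(-1)^{k}(k-2)!\stirling{c}{k}$, and applying \Cref{prop:stirling_k_2} to each piece (valid since $n\ge1$ and $c\ge1$) gives $(n-1)-(c-1)=n-c$. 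Note the hypothesis $c<n$ ensures $|E|\ge1$, so dividing by $|E|$ is legitimate and no further case analysis is needed. The only points requiring care are the component-counting identity $|\Pi'_k(V)|=\stirling{n}{k}-\stirling{c}{k}$ and checking that the bound $|E_A|\le|E|-1$ on $\Pi'_k(V)$ is precisely what licenses \Cref{prop:binom_div}; the rest is bookkeeping.
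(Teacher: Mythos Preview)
Your proposal is correct and follows essentially the same route as the paper: apply \Cref{lemma:graph_esum}, discard partitions with $E_A=E$, collapse the $t$-sum via \Cref{prop:binom_div}, identify $|\Pi'_k(V)|=\stirling{n}{k}-\stirling{c}{k}$, and finish with \Cref{prop:stirling_k_2}. The only differences are cosmetic: you frame the left-hand side as an average of $\Exp{T_e}$ and spell out a few justifications (e.g.\ why $|E|\ge 1$ and why the bound $|E_A|\le|E|-1$ licenses \Cref{prop:binom_div}) that the paper leaves implicit.
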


\begin{proof}
Applying \Cref{lemma:graph_esum} to the left-hand-side, we have
\begin{equation}
\label{eq:graph_det_sum}
\frac{1}{|E|}\sum_{e\in E} \sum_{t=0}^{|E|-1} \frac{d(e,t)}{\binom{|E|-1}{t}} =\frac{1}{|E|}\sum_{t=0}^{|E|-1} \frac{1}{\binom{|E|-1}{t}} \sum_{k\ge 2} (-1)^{k} \cdot (k-2)!  \sum_{A\in \Pi_k(V)} \binom{|E_A|}{t}\cdot (|E|-|E_A|).
\end{equation}
We may restrict the sum in \Cref{eq:graph_det_sum}
to $A\in\Pi'_k(V)$ so that $|E_A|<|E|$, since the other terms of the sum
vanish. Observe that only the binomial coefficients in \Cref{eq:graph_det_sum} 
depend on $t$. By \Cref{prop:binom_div}, we have
\[
\sum_{t=0}^{|E|-1} \frac{\binom{|E_A|}{t}}{\binom{|E|-1}{t}}=\frac{|E|}{|E|-|E_A|}.
\]
Applying this to \Cref{eq:graph_det_sum}, the term cancels and leaves $1$ for every $A\in\Pi'_k(V)$:
    \[
     \frac{1}{|E|}\sum_{e\in E} \sum_{t=0}^{|E|-1} \frac{d(e,t)}{\binom{|E|-1}{t}}=\sum_{k\ge 2} (-1)^k \cdot (k-2)! \cdot |\Pi'_k(V)|
    \]
There are $\stirling{n}{k}$ ways to partition $V$ into $k$ non-empty subsets. 
Of those, the partitions in $\Pi_k'(V)$ exclude the 
$\stirling{c}{k}$ partitions in which every connected component is contained in one of the parts. Hence $|\Pi'_k(V)|=\stirling{n}{k}-\stirling{c}{k}$, and the claim follows by \Cref{prop:stirling_k_2}.
\end{proof}

\begin{corollary}
Let $G=(V,E)$ be a graph with $n$ vertices, $c$ connected components, and no bridges. Adding the edges one by one in a uniformly random order, the expected number of steps until the first edge resides in a cycle is $n-c+1$.
\end{corollary}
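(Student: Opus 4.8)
The plan is to deduce the corollary from \Cref{thm:graph_det_sum} after one short bookkeeping step; essentially all of the content already sits in that theorem.

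First I would dispose of the trivial case $E=\varnothing$, in which there is no ``first edge'' and nothing to prove. So assume $G$ has at least one edge. Then a spanning forest of $G$ contains an edge, i.e.\ $G$ has rank $n-c\ge 1$, hence $c<n$ and \Cref{thm:graph_det_sum} is applicable. Bridgelessness will enter only in the next step, to guarantee that the relevant stopping time is finite in the ``early'' sense (rather than equal to the fallback value $|E|$).

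Next I would line up the two processes. In the process of the corollary the edges are revealed in a uniformly random order $f_1,\dots,f_{|E|}$, and the target quantity is $\Exp{T}$, where $T$ is the least $t$ for which $f_1$ lies on a cycle of $(V,\{f_1,\dots,f_t\})$. The elementary observation is that an edge lies on a cycle of a graph precisely when it is not a bridge; hence $f_1$ lies on a cycle of $(V,\{f_1,\dots,f_t\})$ if and only if the two endpoints of $f_1$ are already joined by a path inside $(V,\{f_2,\dots,f_t\})$. Condition now on $f_1=e$ for a fixed $e\in E$. Conditionally, $(f_2,\dots,f_{|E|})$ is a uniformly random ordering of $E\setminus\{e\}$, so by this observation and the definition of $T_e$ from \Cref{sec:graphs} we obtain $T=1+T_e$ conditionally on $f_1=e$. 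Here bridgelessness guarantees that the endpoints of $e$ do get joined by the edges of $E\setminus\{e\}$, so that $T_e$ is the genuine first such step and not the fallback $|E|$.

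Finally I would average over the uniform choice of $f_1$ and substitute:
\begin{align*}
\Exp{T}&=\frac{1}{|E|}\sum_{e\in E}\bigl(1+\Exp{T_e}\bigr)
=1+\frac{1}{|E|}\sum_{e\in E}\sum_{t=0}^{|E|-1}\frac{d(e,t)}{\binom{|E|-1}{t}}\\
&=1+(n-c),
\end{align*}
using \Cref{eq:exp_Te} for the middle equality and \Cref{thm:graph_det_sum} for the last one, which yields $\Exp{T}=n-c+1$. There is no real mathematical obstacle here; the only points requiring care are the two boundary conditions in the hypotheses — that $E\neq\varnothing$ already forces $c<n$ (so that \Cref{thm:graph_det_sum} can be invoked), and that bridgelessness makes the identity $T=1+T_e$ hold on the nose, so that the very combinatorial sum evaluated in \Cref{thm:graph_det_sum} is precisely the one governing $\Exp{T}$.
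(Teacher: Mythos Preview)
Your proof is correct and follows essentially the same approach as the paper: condition on the first edge, identify the remaining process with the $T_e$ process so that $T=1+T_e$, average using \Cref{eq:exp_Te}, and apply \Cref{thm:graph_det_sum}. The paper's proof is a one-sentence sketch of exactly this argument; your version simply makes explicit the bookkeeping (the $c<n$ check and the role of bridgelessness) that the paper leaves implicit.
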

\begin{proof}
Given that every edge eventually joins a cycle, the expected time is the average of \Cref{eq:exp_Te} over edges as calculated in \Cref{thm:graph_det_sum}, with the addition of $1$ to account for the initial edge.
\end{proof}

\paragraph{Counting graphs with a disconnected pair.}
Let $d(n,t)$ be the number of graphs with $n$ vertices and $t$ edges such that two particular vertices, say $1$ and $2$, reside in distinct connected components. 
Complementing this, $c(n,t)=\binom{\binom{n}{2}}{t}-d(n,t)$ counts
graphs where $1$ and $2$ are in the same component. 
Several small values of these functions are shown in \Cref{fig:dnt_cnt}.
% Side-by-side figures
\begin{figure}[H]
\centering
\begin{subfigure}[t]{.4\textwidth}
\centering
\begin{tabular}{cc|cccc}
 & \multicolumn{1}{c}{} & \multicolumn{4}{c}{$t$}\tabularnewline
 & \multicolumn{1}{c}{} & $0$ & $1$ & $2$ & $3$\tabularnewline
\cline{3-6} \cline{4-6} \cline{5-6} \cline{6-6} 
\multirow{3}{*}{$n$} & $2$ & $1$ &  &  & \tabularnewline
 & $3$ & $1$ & $2$ &  & \tabularnewline
 & $4$ & $1$ & $5$ & $8$ & $2$\tabularnewline
\end{tabular}
\end{subfigure}%
\begin{subfigure}[t]{.4\textwidth}
\centering
\begin{tabular}{cc|cccccc}
 & \multicolumn{1}{c}{} & \multicolumn{6}{c}{$t$} \tabularnewline
 & \multicolumn{1}{c}{} & $1$ & $2$ & $3$ & $4$ & $5$ & $6$\tabularnewline
\cline{3-8} \cline{4-8} \cline{5-8} \cline{6-8} \cline{7-8} \cline{8-8} 
\multirow{3}{*}{$n$} & $2$ & $1$ &  &  &  &  & \tabularnewline
 & $3$ & $1$ & $3$ & $1$ &  &  & \tabularnewline
 & $4$ & $1$ & $7$ & $18$ & $15$ & $6$ & $1$\tabularnewline
\end{tabular}
\end{subfigure}
\caption{Values of $d(n,t)$ (left) and $c(n,t)$ (right). Row sums of $c(n,t)$ are \cite[A224510]{oeis}.}
\label{fig:dnt_cnt}
\end{figure}
We arrive at the following identity:
\begin{corollary}
\label{cor:graph_identity}
    For every $n\ge 2$,
    \[
    \sum_{t=0}^{\binom{n}{2}-1} \frac{d(n,t)}{\binom{\binom{n}{2}-1}{t}}=n-1.
    \]
\end{corollary}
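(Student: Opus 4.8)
The plan is to recognize this identity as the special case of \Cref{thm:graph_det_sum} in which $G = K_n$, the complete graph on $[n]$, and $e = \{1,2\}$. First I would check that the function $d(n,t)$ defined just above the corollary coincides with the quantity $d(e,t)$ from \Cref{sec:graphs} for this choice of $G$ and $e$. A $t$-edge subgraph of $K_n \setminus e$ is precisely a $t$-edge graph on vertex set $[n]$ that does not use the edge $\{1,2\}$, and in such a graph $e$ is split exactly when vertices $1$ and $2$ lie in distinct connected components. Since any graph in which $1$ and $2$ lie in distinct components automatically omits the edge $\{1,2\}$, the two counts agree, i.e.\ $d(e,t) = d(n,t)$.

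Next I would exploit the symmetry of $K_n$. Its automorphism group acts transitively on edges, and the definition of $d(\cdot,t)$ is invariant under graph isomorphism, so in fact $d(e,t) = d(n,t)$ for \emph{every} edge $e$ of $K_n$. Hence the average over edges appearing on the left-hand side of \Cref{thm:graph_det_sum} collapses, and using $|E| = \binom{n}{2}$ we get
\[
\frac{1}{|E|}\sum_{e\in E}\sum_{t=0}^{|E|-1}\frac{d(e,t)}{\binom{|E|-1}{t}} = \sum_{t=0}^{\binom{n}{2}-1}\frac{d(n,t)}{\binom{\binom{n}{2}-1}{t}}.
\]

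Finally I would invoke \Cref{thm:graph_det_sum} directly: $K_n$ has $n$ vertices and $c = 1$ connected component, the hypothesis $c < n$ holds for all $n \ge 2$, so the quantity above equals $n - c = n - 1$, which is exactly the claimed identity. I do not expect any real obstacle here beyond the bookkeeping of matching the two descriptions of $d$; the only point worth a second glance is the degenerate case $n = 2$, where $|E| = 1$ and the sum reduces to the single term $d(2,0)/\binom{0}{0} = 1 = n-1$, in agreement with the theorem.
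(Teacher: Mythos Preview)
Your proposal is correct and is exactly the paper's approach: the paper's entire proof is the one line ``Apply \Cref{thm:graph_det_sum} to $K_n$.'' Your additional remarks---checking that $d(n,t)=d(e,t)$, using edge-transitivity to collapse the average, and verifying the $n=2$ edge case---are the natural details behind that sentence and are all sound.
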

\begin{proof}
    Apply \Cref{thm:graph_det_sum} to $K_n$.
\end{proof}
It is possible to compute $d(n,t)$ as follows. Let $q(n,t)$ be the number of connected
graphs on $n$ vertices with $t$ edges. Say that vertex $1$ is in a connected component 
with $k$ vertices and $s$ edges, and choose
any graph with $t-s$ edges on the remaining $n-k$ vertices. Then:
\[
d(n,t)=\sum_{k=1}^{n-1} \binom{n-2}{k-1} \sum_{s=0}^{t} q(k,s) \cdot \binom{\binom{n-k}{2}}{t-s}
\]
The quantity $q(k,s)$ may be computed using exponential generating functions, see
e.g., \cite{flajolet2009analytic,harary2014graphical}. One can also consider the exponential generating function of $d$. Let:
\[
\mathcal{D}(x,y)=\sum_{n,t} \frac{x^n}{n!} \cdot y^t \cdot d(n,t),\quad\mathcal{Q}(x,y)=\sum_{n,t} \frac{x^n}{n!} \cdot y^t \cdot q(n,t),\quad\mathcal{G}(x,y)=\sum_{n,t} \frac{x^n}{n!} \cdot y^t \cdot \binom{\binom{n}{2}}{t}.
\]
A standard argument in the theory of exponential generating functions yields 
$\mathcal{D}''=\mathcal{Q}' \cdot \mathcal{G}'$ where the derivatives are with respect to $x$. We initially attempted to prove \Cref{cor:graph_identity} directly from such equalities, and eventually turned to the methods described in \Cref{lemma:graph_esum} and \Cref{thm:graph_det_sum}.

\section{Matroids}
\label{sec:matroids}
Let $M=(E,I)$ be a matroid. Pick some element $e\in E$, and
consider the random process where the elements of $E\setminus\{e\}$ are
collected one at a time in a uniformly random order.
Define the random variable $T_e$ as follows:
If $e$ is contained in a circuit, then $T_e$ is
the earliest step at which $e$ is in the closure of previously chosen
elements. If $e$ is an isthmus, we let $T_e\eqdef |E|$. Let $d(e,t)$ be the number of subsets $R\in\binom{E}{t}$ such that $e\notin\cl{R}$. 
The expression for $\Exp{T_e}$ in \Cref{eq:exp_Te} holds as well for the more general framework of matroids. 
We now develop the matroid-theoretic counterpart of
\Cref{lemma:graph_esum}.
\begin{lemma}
\label{lemma:matroid_esum}
Let $M=(E,I)$ be a matroid and let $0\le t\le |E|-1$. Then
\[
\sum_{e\in E} d(e,t)=
\sum_{A\in\mathcal{F}} \binom{|A|}{t} \cdot (|E|-|A|)\cdot p'_{M/A}(1).
\]
\end{lemma}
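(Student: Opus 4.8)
The plan is to rewrite both sides of the identity as sums over a $t$-element set $R\subseteq E$ and then identify the resulting inner sum with the quantity evaluated in \Cref{prop:mu_contr_sum}. To begin, $\sum_{e\in E}d(e,t)$ counts the pairs $(e,R)$ with $R\in\binom{E}{t}$ and $e\notin\cl{R}$. Summing over $R$ first, for a fixed $R$ there are exactly $|E|-|\cl{R}|$ admissible choices of $e$, so
\[
\sum_{e\in E}d(e,t)=\sum_{R\in\binom{E}{t}}(|E|-|\cl{R}|).
\]

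For the right-hand side I would write $\binom{|A|}{t}$ as the number of $t$-subsets $R\subseteq A$ and exchange the order of summation, obtaining
\[
\sum_{A\in\mathcal{F}}\binom{|A|}{t}\cdot(|E|-|A|)\cdot p'_{M/A}(1)=\sum_{R\in\binom{E}{t}}\ \sum_{\substack{A\in\mathcal{F}\\ R\subseteq A}}(|E|-|A|)\cdot p'_{M/A}(1);
\]
here every $t$-set $R$ genuinely occurs, since $R\subseteq\cl{R}\in\mathcal{F}$. The crucial point is that a flat $A$ satisfies $R\subseteq A$ if and only if $\cl{R}\subseteq A$ (flats are closed under $\cl{\cdot}$), so the inner sum runs precisely over the flats containing the flat $H\eqdef\cl{R}$; by \Cref{prop:mu_contr_sum} applied with this $H$ it equals $|E|-|H|=|E|-|\cl{R}|$. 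Summing over $R$ reproduces exactly the expression obtained for the left-hand side, which proves the lemma.

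There is essentially no obstacle once \Cref{prop:mu_contr_sum} is available; the only points requiring care are the Fubini exchange (checking that no $t$-set is omitted) and the elementary reduction $\{A\in\mathcal{F}:R\subseteq A\}=\{A\in\mathcal{F}:\cl{R}\subseteq A\}$. I would also note that $M$ need not be assumed loopless: any loop of $M$ lies in $\cl{R}$ for every $R$, hence contributes $0$ to $d(\cdot,t)$, which is consistent with both sides, while \Cref{prop:mu_contr_sum} only ever invokes looplessness of a contraction $M/A$ by a flat, which always holds. Compared with \Cref{lemma:graph_esum}, this argument is much shorter precisely because \Cref{prop:mu_contr_sum} has already absorbed the combinatorial cancellation that was performed by hand through \Cref{prop:stirling_k_1} in the graph case.
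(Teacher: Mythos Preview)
Your proof is correct and follows essentially the same route as the paper: expand $\binom{|A|}{t}$ as a sum over $t$-subsets $R\subseteq A$, exchange the order of summation, replace the condition $R\subseteq A$ by $\cl{R}\subseteq A$, and apply \Cref{prop:mu_contr_sum} with $H=\cl{R}$ to reduce the inner sum to $|E|-|\cl{R}|$, which matches the double-counting expression for $\sum_{e\in E}d(e,t)$. The only difference is cosmetic (you start from the left-hand side rather than the right), and your closing remarks on loops and the comparison with \Cref{lemma:graph_esum} are accurate bonus observations.
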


\begin{proof}
Every flat $A\in\mathcal{F}$ and subset $R\subseteq A$ of $t$ elements contribute $(|E|-|A|)\cdot p'_{M/A}(1)$ to the right-hand-side. We rearrange the sum by $R$: For every subset $R\subseteq E$ of $t$ elements, iterate over every flat $A$ that contains $R$. We note that $A$ contains $R$ if and only if it contains the flat $\cl{R}$. We can write the sum as
\[
\sum_{R\in\binom{E}{t}} \sum_{\substack{A\in\mathcal{F}\\\text{~s.t.~} \cl{R}\subseteq A}} (|E|-|A|) \cdot p'_{M/A}(1).
\]
Applying \Cref{prop:mu_contr_sum} with $H=\cl{R}$ to the inner sum, this simplifies to
\[
\sum_{R\in\binom{E}{t}} (|E|-|\cl{R}|)
\]
which counts the number of ways to choose $t$ elements from $E$ and another element not in their closure, and hence equals $\sum_{e\in E} d(e,t)$.
\end{proof}

\begin{theorem}
\label{thm:mat_det_sum}
The following holds for every matroid $M=(E,I)$:
\[
\frac{1}{|E|} \sum_{e\in E} \sum_{t=0}^{|E|-1} \frac{d(e,t)}{\binom{|E|-1}{t}}=r(E).
\]
\end{theorem}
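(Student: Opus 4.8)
The plan is to mirror the proof of \Cref{thm:graph_det_sum}, now feeding in the matroid analogues developed above. First I would apply \Cref{lemma:matroid_esum} to replace $\sum_{e\in E} d(e,t)$ inside the double sum, which rewrites the left-hand side as
\[
\frac{1}{|E|} \sum_{t=0}^{|E|-1} \frac{1}{\binom{|E|-1}{t}} \sum_{A\in\mathcal{F}} \binom{|A|}{t} \cdot (|E|-|A|) \cdot p'_{M/A}(1).
\]
All the sums here are finite, so I would freely interchange the order of summation, pulling out the only $t$-dependent factors, namely the two binomial coefficients, into the inner sum $\sum_{t=0}^{|E|-1} \binom{|A|}{t}\big/\binom{|E|-1}{t}$.

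Next I would note that the flat $A=E$ contributes nothing, thanks to the factor $|E|-|A|$, so the outer sum may be restricted to flats $A\subsetneq E$. For these we have $|A|\le |E|-1$, which is exactly the inequality required to apply \Cref{prop:binom_div} with $n=|E|-1$ and $m=|A|$; it yields $\sum_{t=0}^{|E|-1}\binom{|A|}{t}\big/\binom{|E|-1}{t}=|E|/(|E|-|A|)$. Plugging this in, the factor $|E|-|A|$ cancels against the denominator and the leading $1/|E|$ cancels against the numerator, so each flat $A\subsetneq E$ contributes precisely $p'_{M/A}(1)$, and the whole expression collapses to $\sum_{A\in\mathcal{F},\,A\subsetneq E} p'_{M/A}(1)$.

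Finally I would observe that $M/E$ is the rank-zero matroid on the empty ground set, whose characteristic polynomial is the constant $1$, so $p'_{M/E}(1)=0$ and adjoining the term $A=E$ changes nothing. Hence the sum equals $\sum_{A\in\mathcal{F}} p'_{M/A}(1)$, which is $r(E)$ by \Cref{prop:mu_prank}, completing the proof. (The statement implicitly assumes $|E|\ge 1$; when $M$ has loops, $d(e,t)=0$ for every loop $e$, and the identity still holds since $r(E)$ only sees the non-loops.)

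There is no real obstacle once \Cref{lemma:matroid_esum} is available: the remainder is bookkeeping. The one point that genuinely needs care is that the vanishing of the $A=E$ term must be handled \emph{before} invoking \Cref{prop:binom_div}, since for $A=E$ one would have $m=|E|>|E|-1=n$, outside the hypothesis of that proposition; it is the factor $|E|-|A|$, not the binomial identity, that disposes of that term.
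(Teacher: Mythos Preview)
Your proof is correct and follows essentially the same route as the paper's: apply \Cref{lemma:matroid_esum}, drop the $A=E$ term via the factor $|E|-|A|$, collapse the $t$-sum with \Cref{prop:binom_div}, reinstate $A=E$ using $p'_{M/E}(1)=0$, and finish with \Cref{prop:mu_prank}. Your explicit remark that the $A=E$ term must be discarded \emph{before} invoking \Cref{prop:binom_div} (since $m=|E|>n=|E|-1$ there) is a nice point of care that the paper leaves implicit.
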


\begin{proof}
By applying \Cref{lemma:matroid_esum} to the sum on the left-hand-side, we have
\begin{equation}
\label{eq:mat_det_sum}
\frac{1}{|E|} \sum_{e\in E} \sum_{t=0}^{|E|-1} \frac{d(e,t)}{\binom{|E|-1}{t}}=\frac{1}{|E|} \sum_{t=0}^{|E|-1} \frac{1}{\binom{|E|-1}{t}}\sum_{A\in \mathcal{F}} \binom{|A|}{t} \cdot (|E|-|A|) \cdot p'_{M/A}(1).
\end{equation}
We may restrict the sum to flats $A\in\mathcal{F}$ such that $A\subsetneq E$, since the other terms of the sum vanish. Observe that only the binomial coefficients in \Cref{eq:mat_det_sum} depend on $t$, and for every flat $A\in\mathcal{F}$ such that $A\subsetneq E$, we have by \Cref{prop:binom_div},
\[
\sum_{t=0}^{|E|-1} \frac{\binom{|A|}{t}}{\binom{|E|-1}{t}}=\frac{|E|}{|E|-|A|}.
\]
Applying this to \Cref{eq:mat_det_sum}, we have
\[
\frac{1}{|E|} \sum_{e\in E} \sum_{t=0}^{|E|-1} \frac{d(e,t)}{\binom{|E|-1}{t}}=\sum_{\substack{A\in\mathcal{F}\\\text{~s.t.~} A\subsetneq E}} p'_{M/A}(1).\]
We note that the restriction $A\subsetneq E$ does not affect the right-hand-side, as $p'_{M/E}(1)=0$. By \Cref{prop:mu_prank}, the sum of $p'_{M/A}(1)$ over all flats $A\in\mathcal{F}$ equals $r(E)$.
\end{proof}
\begin{corollary}
Let $M=(E,I)$ be a matroid such that every $e\in E$ is contained in a circuit. Adding the elements one at a time in a uniformly chosen random order, the expected number of steps until the first element joins a circuit is $r(E)+1$.
\end{corollary}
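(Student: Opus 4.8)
The plan is to mirror the proof of the graph corollary, with \Cref{thm:mat_det_sum} playing the role that \Cref{thm:graph_det_sum} played there. First I would pin down the random process in question: the elements $e_1,e_2,\ldots,e_{|E|}$ of $E$ are revealed in a uniformly random order, and we want the expected value of the first step $j$ at which $e_1\in\cl{\{e_2,\ldots,e_j\}}$, i.e.\ the first step at which $e_1$ lies on a circuit together with previously revealed elements. Conditioning on the identity of $e_1$, the remaining elements of $E\setminus\{e_1\}$ are revealed in a uniformly random order, so this step is exactly $T_{e_1}+1$, where $T_{e_1}$ is the stopping time defined in \Cref{sec:matroids} for the process on $E\setminus\{e_1\}$.

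Next I would use the hypothesis. Since every element of $E$ lies in a circuit, no element is an isthmus, so for each $e$ the set $E\setminus\{e\}$ spans $e$; hence $T_e$ is an honest hitting time, finite with probability $1$, and the event ``the first element joins a circuit'' occurs almost surely with time $T_{e_1}+1$. (This is precisely where the hypothesis is needed: if $e_1$ were an isthmus the event would never occur. I would also remark in passing that the convention $T_e\eqdef|E|$ for an isthmus is consistent with the formula used below, since an isthmus $e$ has $d(e,t)=\binom{|E|-1}{t}$ for all $t$, whence $\sum_{t=0}^{|E|-1} d(e,t)/\binom{|E|-1}{t}=|E|$.)

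Then the computation is immediate. By \Cref{eq:exp_Te}, which the paper notes holds verbatim in the matroid setting, $\Exp{T_e}=\sum_{t=0}^{|E|-1} d(e,t)/\binom{|E|-1}{t}$ for every $e\in E$. Averaging over the uniformly random choice of $e_1$,
\[
\Exp{T_{e_1}+1}=1+\frac{1}{|E|}\sum_{e\in E}\Exp{T_e}=1+\frac{1}{|E|}\sum_{e\in E}\sum_{t=0}^{|E|-1}\frac{d(e,t)}{\binom{|E|-1}{t}},
\]
and by \Cref{thm:mat_det_sum} the double sum equals $r(E)$, giving the claimed value $r(E)+1$.

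I do not expect a genuine obstacle here: all the substance is already carried by \Cref{lemma:matroid_esum} and \Cref{thm:mat_det_sum}. The only points that need a careful sentence are the bookkeeping of the ``$+1$'' — keeping the step index of the full $|E|$-element process distinct from the stopping time $T_e$ of the $(|E|-1)$-element process on $E\setminus\{e\}$ — and the observation that the hypothesis ``every element lies in a circuit'' is exactly what makes the target event well-defined and almost sure, so that no separate treatment of isthmuses is required.
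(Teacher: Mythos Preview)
Your proposal is correct and follows essentially the same approach as the paper's own proof: condition on the first element, use the hypothesis to rule out isthmuses, apply \Cref{eq:exp_Te} and average via \Cref{thm:mat_det_sum}, then add $1$ for the initial element. The paper's argument is more terse but identical in substance.
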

\begin{proof}
    Given that every $e\in E$ eventually joins a circuit, the expected value is the average of \Cref{eq:exp_Te} over elements as calculated in \Cref{thm:mat_det_sum}, with the addition of $1$ to account for the initial element.
\end{proof}

\section{Acknowledgments}
We thank Yuval Peled and Doron Zeilberger for their insightful comments.

\bibliographystyle{alpha}
\bibliography{main}

\end{document}